\renewcommand{\baselinestretch}{1.08}
\theoremstyle{plain} 
\newtheorem{theorem}{Theorem}
\newtheorem{proposition}{Proposition}
\newcommand{\bt}[2][]{\ensuremath{\textup{\textsf{bt}}_{#1}(#2)}}
\newcommand{\as}[1]{\ensuremath{\protect{{\widehat{#1}}}}}
\newcommand{\thmlabel}[1]{\label{thm:#1}}
\newcommand{\thmref}[1]{Theorem~\ref{thm:#1}}
\newcommand{\figlabel}[1]{\label{fig:#1}}
\newcommand{\figref}[1]{Figure~\ref{fig:#1}}
\newcommand{\Figure}[4][htb]{
  \begin{figure}[#1]
    \vspace*{1ex}
    \begin{center}#3\end{center} \vspace*{-2ex}
    \caption{\figlabel{#2}#4}
  \end{figure}}
\begin{document}

\title{{\bf On the book thickness of $k$-trees}\footnote{\textbf{AMS
      classification:} 05C62, 68R10. \newline \textbf{Keywords:} book
    embedding, book thickness, pagenumber, stacknumber, treewidth,
    tree decomposition}}

\author{Vida Dujmovi\'c\footnote{School of Computer Science, Carleton
    University, Ottawa, Canada (\texttt{vida@scs.carleton.ca}).}  \and
  David R.\ Wood\footnote{Department of Mathematics and Statistics,
    The University of Melbourne, Melbourne, Australia
    (\texttt{woodd@unimelb.edu.au}). Supported by a QEII Research
    Fellowship from the Australian Research Council.}\\[2ex]
}

\maketitle

\begin{abstract}
  Every $k$-tree has book thickness at most $k+1$, and this bound is
  best possible for all $k\geq3$. Vandenbussche et al. (2009) proved
  that every $k$-tree that has a smooth degree-3 tree decomposition
  with width $k$ has book thickness at most $k$. We prove this result
  is best possible for $k\geq 4$, by constructing a $k$-tree with book
  thickness $k+1$ that has a smooth degree-4 tree decomposition with
  width $k$.  This solves an open problem of Vandenbussche et
  al. (2009)
\end{abstract}

\section{Introduction}

Consider a drawing of a graph\footnote{We consider simple, finite,
  undirected graphs $G$ with vertex set $V(G)$ and edge set $E(G)$. We
  employ standard graph-theoretic terminology; see
  \citep{Diestel00}. For disjoint $A,B\subseteq V(G)$, let $G[A;B]$
  denote the bipartite subgraph of $G$ with vertex set $A\cup B$ and
  edge set $\{vw\in E(G):v\in A, w\in B\}$. } $G$ in which the
vertices are represented by distinct points on a circle in the plane,
and each edge is a chord of the circle between the corresponding
points. Suppose that each edge is assigned one of $k$ colours such
that crossing edges receive distinct colours. This structure is called
a \emph{$k$-page book embedding} of $G$: one can also think of the
vertices as being ordered along the spine of a book, and the edges
that receive the same colour being drawn on a single page of the
book without crossings. The \emph{book thickness} of $G$, denoted by
\bt{G}, is the minimum integer $k$ for which there is a $k$-page book
embedding of $G$. Book embeddings, first defined by \citet{Ollmann73},
are ubiquitous structures with a variety of applications; see
\citep{DujWoo-DMTCS04} for a survey with over 50 references. A book
embedding is also called a \emph{stack layout}, and book thickness is
also called \emph{stacknumber}, \emph{pagenumber} and \emph{fixed
  outerthickness}.

This paper focuses on the book thickness of $k$-trees. A vertex $v$ in
a graph $G$ is \emph{$k$-simplicial} if its neighbourhood, $N_G(v)$, is
a $k$-clique.  For $k\geq1$, a \emph{$k$-tree} is a graph $G$ such
that either $G\simeq K_{k+1}$, or $G$ has a $k$-simplicial vertex $v$
and $G\setminus v$ is a $k$-tree. In the latter case, we say that $G$
is obtained from $G-v$ by \emph{adding $v$ onto} the $k$-clique
$N_G(v)$.

What is the maximum book thickness of a $k$-tree? Observe that 1-trees
are precisely the trees. \citet{BK79} proved that every 1-tree has a
$1$-page book embedding. In fact, a graph has a 1-page book embedding
if and only if it is outerplanar \citep{BK79}. 2-trees are the
edge-maximal series-parallel graphs. \citet{RM-COCOON95}
proved that every series parallel graph, and thus every 2-tree, has a
$2$-page book embedding (also see \citep{GDLW-Algo06}). This bound is
best possible, since $K_{2,3}$ is series parallel and is not
outerplanar. \citet{GH-DAM01} proved that every $k$-tree has a
$(k+1)$-page book embedding; see \citep{DujWoo-DCG07} for an
alternative proof. \citet{GH-DAM01} also conjectured that every
$k$-tree has a $k$-page book embedding. This conjecture was refuted by
\citet{DujWoo-DCG07}, who constructed a $k$-tree with book thickness
$k+1$ for all $k\geq3$. \citet{vandenbussche:1455} independently
proved the same result. Therefore the maximum book thickness of a
$k$-tree is $k$ for $k\leq2$ and is $k+1$ for $k\geq3$.

Which families of $k$-trees have $k$-page book embeddings?
\citet{TY-DM02} proved that every graph with pathwidth $k$ has a
$k$-page book embedding (and there are graphs with pathwidth $k$ and
book thickness $k$). This result is equivalent to saying that every
$k$-tree that has a smooth degree-2 tree decomposition\footnote{See
  \citep{Diestel00} for the definition of tree decomposition and
  treewidth. Note that $k$-trees are the edge maximal graphs with
  treewidth $k$. A tree decomposition of width $k$ is \emph{smooth} if
  every bag has size exactly $k+1$ and any two adjacent bags have
  exactly $k$ vertices in common. Any tree decomposition of a graph
  $G$ can be converted into a smooth tree decomposition of $G$ with
  the same width. A tree decomposition is \emph{degree-$d$} if the
  host tree has maximum degree at most $d$.} of width $k$ has a $k$-page book
embedding. \citet{vandenbussche:1455} extended this result by showing
that every $k$-tree that has a smooth degree-3 tree decomposition of
width $k$ has a $k$-page book embedding. \citet{vandenbussche:1455} then
introduced the following natural definition. Let $m(k)$ be the maximum
integer $d$ such that every $k$-tree that has a smooth degree-$d$
tree decomposition of width $k$ has a $k$-page book
embedding. \citet{vandenbussche:1455} proved that $3\leq m(k)\leq
k+1$, and state that determining $m(k)$ is an open problem. However,
it is easily seen that the $k$-tree with book thickness $k+1$
constructed in \citep{DujWoo-DCG07} has a smooth degree-5
tree decomposition with width $k$. Thus $m(k)\leq 4$ for all
$k\geq3$. The main result of this note is to refine the construction
in \citep{DujWoo-DCG07} to give a $k$-tree with book thickness $k+1$
that has a smooth degree-4 tree decomposition with width $k$ for all
$k\geq4$. This proves that $m(k)=3$ for all $k\geq4$. It is open
whether $m(3)=3$ or $4$. We conjecture that $m(3)=3$. 

\section{Construction}

\begin{theorem}
  \thmlabel{bt-smooth} For all $k\geq 4$ and $n\geq 11(2k^2+1)+k$,
  there is an $n$-vertex $k$-tree $Q$, such that $\bt{Q}=k+1$ and $Q$
  has a smooth degree-4 tree decomposition of width $k$.
\end{theorem}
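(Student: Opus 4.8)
The strategy is to take the $k$-tree $G$ from \citep{DujWoo-DCG07} with $\bt{G}=k+1$, understand exactly why its natural tree decomposition has host-tree maximum degree $5$, and then ``split'' the offending degree-$5$ bag into two adjacent bags of degree at most $4$ each, paying for this with extra vertices.  To do this cleanly one first recalls the lower-bound mechanism: the construction of \citep{DujWoo-DCG07} forces $k+1$ pages by exhibiting, for \emph{every} vertex ordering $\sigma$ of the spine, three edges $e_1,e_2,e_3$ that pairwise cross, together with $k-2$ further edges each crossing all of $e_1,e_2,e_3$, yielding a $(k+1)$-clique in the ``conflict graph'' and hence $\bt{G}\ge k+1$.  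The key point is that this forcing is achieved by many disjoint copies of a small gadget attached onto a common $k$-clique, so that some copy is ``trapped'' no matter how $\sigma$ behaves; in the original construction the central clique together with its attachment structure is what needs host-degree $5$.

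\smallskip

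First I would describe the gadget $H$ precisely: fix a $k$-clique $C$, and onto $C$ add vertices to create a bounded number of $(k+1)$-cliques arranged so that, whatever the relative spine-order of the $k$ vertices of $C$ is, at least one added vertex together with vertices of $C$ produces the required triple of pairwise-crossing edges plus $k-2$ nesting edges.  Second, I would take $t := \Theta(k^2)$ disjoint copies $H_1,\dots,H_t$ of this gadget, all glued along the \emph{same} $k$-clique $C$ (or along a short path of $k$-cliques, to keep degrees down), so that in any spine ordering the $k$ vertices of $C$ realise some fixed pattern, and by a counting/pigeonhole argument over the $\binom{k}{2}$ possible ``crossing patterns'' at least one copy $H_i$ is forced into the bad configuration — this is where the hypothesis $n\ge 11(2k^2+1)+k$ comes from.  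Third, and this is the new ingredient, I would exhibit a tree decomposition whose host tree is obtained from a \emph{caterpillar}: a central path of bags each of size $k+1$ carrying $C$ (plus one rotating vertex), with the copies $H_1,\dots,H_t$ hung off this path so that each path-bag has at most two ``pendant'' gadget-subtrees attached, giving host-degree $\le 2+2=4$; then smooth it (every bag size exactly $k+1$, consecutive bags sharing exactly $k$ vertices) using the standard smoothing operation, checking that smoothing does not raise the host-degree above $4$.

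\smallskip

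The main obstacle is reconciling the two competing demands: the lower-bound argument wants \emph{many} gadgets sharing essentially \emph{one} clique (so that the adversary's single ordering of that clique traps a gadget), but a shared clique sitting in many bags forces large host-degree.  The resolution — spreading $C$ along a path of bags while still ensuring every gadget ``sees'' the \emph{same} cyclic order on $C$'s vertices — is the delicate part, because once $C$ is distributed over a path, different gadgets attach at different points of the path and one must argue the spine order of $V(C)$ is still globally coherent enough to run the pigeonhole.  I expect the fix is to route all of $C$ through every bag of a short central path (so $C$ genuinely lies in a common bag-intersection) and attach at most two gadget-trees per bag; then host-degree is exactly $2$ (along the path) plus $2$ (pendants) $=4$, the smoothing step is routine, and the counting inequality for $n$ falls out by bookkeeping the vertices in the $t$ gadgets plus the $k$ vertices of $C$.
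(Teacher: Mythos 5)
Your high-level strategy is the right one (and is indeed what the paper does): keep the lower-bound machinery of \citep{DujWoo-DCG07} but redesign the attachment structure of the gadgets so that the tree decomposition has host-degree $4$ instead of $5$. However, as written the proposal has a genuine gap: the gadget, which is the entire content of the theorem, is never constructed. You say ``I would describe the gadget $H$ precisely'' and ``I expect the fix is\dots'', but the two competing demands you correctly identify --- the lower bound needs enough attached structure to trap a monochromatic crossing for every layout, while the decomposition allows each bag at most four neighbours --- are resolved only by a specific design, and nothing in the proposal pins it down. In the paper's construction each of the $2k^2+1$ vertices $v\in S$ (all glued onto one $k$-clique $K$, whose bags form the central path, as you suggest) carries one vertex $w$ added onto $(K\cup\{v\})\setminus\{u_1\}$, and then three triples of simplicial vertices added onto the cliques $(K\cup\{v,w\})\setminus\{u_1,u_i\}$ for $i\in\{2,3,4\}$. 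The choice of exactly three groups of exactly three vertices, each group omitting $u_1$ and one further fixed vertex of $K$, is what simultaneously (i) lets each group hang as a path of bags off the single bag of $w$, so the worst bag has degree $1+3=4$ (note the critical degree constraint lives \emph{inside} the gadget, not at the central path bags, which your caterpillar description does not address), and (ii) makes the counting work in the layout: nine vertices trapped in one arc force, by pigeonhole, two vertices from each of two \emph{different} groups into the same sub-arc, and the two different omitted vertices $u_i,u_j$ are needed so that the union of their neighbourhoods in $K$ covers $K\setminus\{u_1\}$, forcing two crossing edges of colour $1$.

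A second, related problem is your description of the lower-bound mechanism. The argument of \citep{DujWoo-DCG07}, and of this paper, is not ``exhibit $k+1$ pairwise crossing edges / a $(k+1)$-clique in the conflict graph''; three pairwise crossing edges plus $k-2$ edges crossing those three do not force $k+1$ colours unless the latter also pairwise cross, and no such configuration is produced here. What actually happens is a colour-forcing ladder: pigeonholing the $2k^2+1$ vertices of $S$ into the $k$ arcs between consecutive vertices of $K$ gives $2k+1$ vertices $v_1,\dots,v_{2k+1}$ in one arc, the crossing pattern then forces every edge $qu_i$ with $q\in\as{v_kv_{k+2}}$ to have colour $i$, and only then does the trapped gadget at $v_{k+1}$ yield two crossing edges that must both receive colour $1$. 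Building a gadget around your stated mechanism would likely fail, whereas the forcing argument dictates exactly the gadget shape described above. So the proposal identifies the correct programme and the correct host-tree shape, but the key new idea of the theorem --- the degree-$4$-compatible gadget together with the pigeonhole over its three triples --- is missing, and without it neither $\bt{Q}\geq k+1$ nor the arithmetic $n\geq 11(2k^2+1)+k$ can be verified.
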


\begin{proof}
  Start with the complete split graph $K^\star_{k,2k^2+1}$. That is,
  $K^\star_{k,2k^2+1}$ is the $k$-tree obtained by
  adding a set $S$ of $2k^2+1$ vertices onto a $k$-clique
  $K=\{u_1, u_2, \dots, u_k\}$, as illustrated in \figref{Gks}.
  \Figure{Gks}{\includegraphics{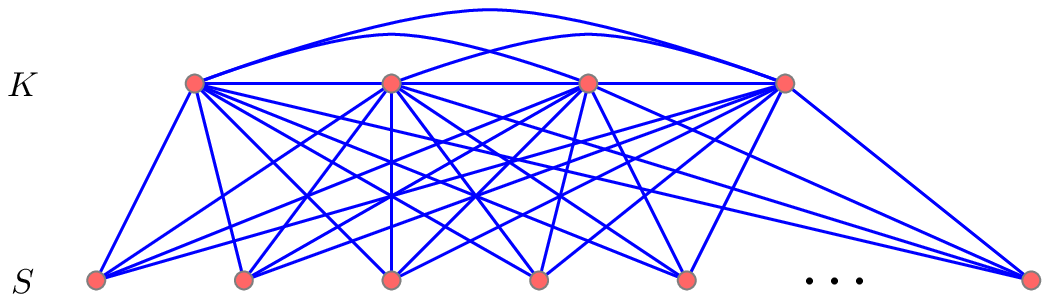}}{The complete split graph
    $K^\star_{4,|S|}$.}
  For each vertex $v\in S$ add a vertex onto the $k$-clique
  $(K\cup\{v\})\setminus\{u_1\}$. Let $T$ be the set of vertices added
  in this step.  For each $w\in T$, if $v$ is the neighbour of $w$ in
  $S$, then add a set $T_2(w)$ of three simplicial vertices onto the
  $k$-clique $(K\cup\{v,w\})\setminus\{u_1,u_2\}$, add a set $T_3(w)$
  of three simplicial vertices onto the $k$-clique
  $(K\cup\{v,w\})\setminus\{u_1,u_3\}$, and add a set $T_3(w)$ of
  three simplicial vertices onto the $k$-clique
  $(K\cup\{v,w\})\setminus\{u_1,u_4\}$.  For each $w\in T$, let
  $T(w):=T_2(w)\cup T_3(w)\cup T_4(w)$.  By construction, $Q$ is a
  $k$-tree, and as illustrated in \figref{DegreeFour}, $Q$ has a
  smooth degree-4 tree decomposition of width $k$.

  \Figure{DegreeFour}{\includegraphics{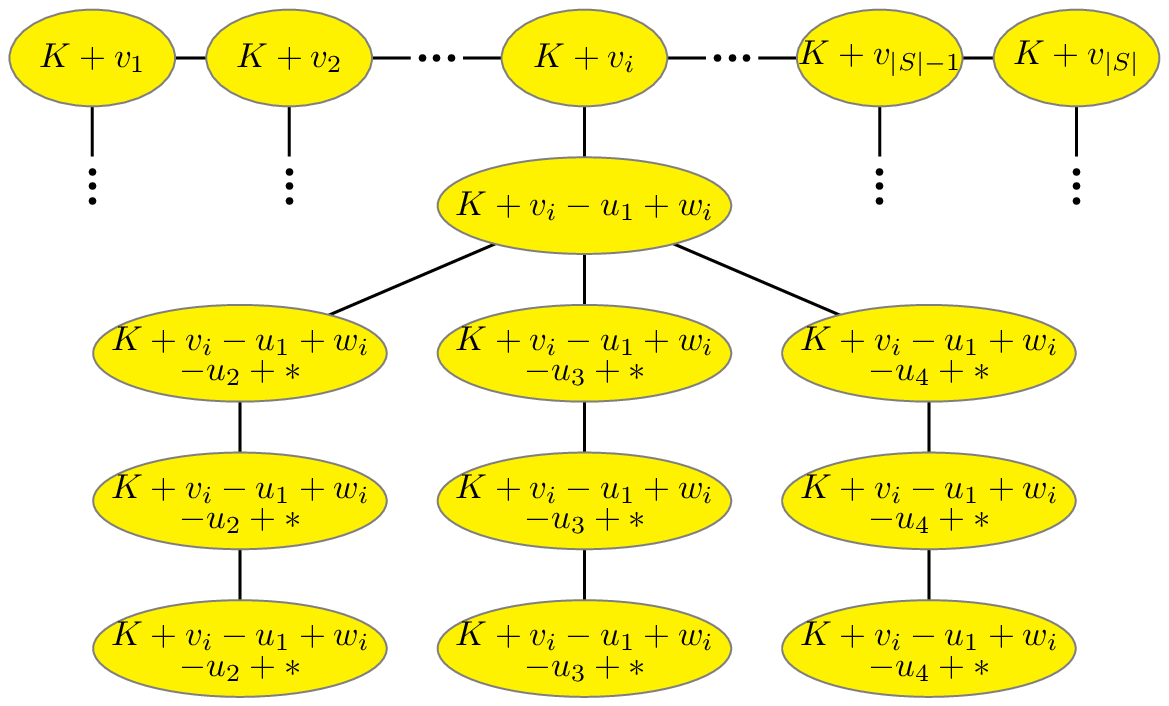}}{A smooth degree-4
    tree decomposition of $Q$.}

  It remains to prove that $\bt{Q}\geq k+1$. Suppose, for the sake of
  contradiction, that $Q$ has a $k$-page book embedding. Say the edge
  colours are $1,2,\dots,k$.  For each ordered pair of vertices
  $v,w\in V(Q)$, let $\as{vw}$ be the list of vertices in clockwise
  order from $v$ to $w$ (not including $v$ and $w$).

  Say $K=(u_1,u_2,\dots,u_k)$ in anticlockwise order. Since there are
  $2k^2+1$ vertices in $S$, by the pigeonhole principle, without loss
  of generality, there are at least $2k+1$ vertices in
  $S\cap\as{u_1u_k}$. Let $(v_1,v_2,\dots,v_{2k+1})$ be $2k+1$
  vertices in $S\cap\as{u_1u_k}$ in clockwise order.

  Observe that the $k$ edges $\{u_iv_{k-i+1}:1\leq i\leq k\}$ are
  pairwise crossing, and thus receive distinct colours, as illustrated
  in \figref{LowerBoundProof}(a). Without loss of generality, each
  $u_iv_{k-i+1}$ is coloured $i$. As illustrated in
  \figref{LowerBoundProof}(b), this implies that $u_1v_{2k+1}$ is
  coloured $1$, since $u_1v_{2k+1}$ crosses all of
  $\{u_iv_{k-i+1}:2\leq i\leq k\}$ which are coloured
  $2,3,\dots,k$. As illustrated in \figref{LowerBoundProof}(c), this
  in turn implies that $u_2v_{2k}$ is coloured $2$, and so on. By an
  easy induction, $u_iv_{2k+2-i}$ is coloured $i$ for each
  $i\in\{1,2,\dots,k\}$, as illustrated in
  \figref{LowerBoundProof}(d). It follows that for all
  $i\in\{1,2,\dots,k\}$ and $j\in\{k-i+1,k-i+2,\dots,2k+2-i\}$, the
  edge $u_iv_j$ is coloured $i$, as illustrated in
  \figref{LowerBoundProof}(e). Moreover, as illustrated in
  \figref{LowerBoundProof}(f):

  \smallskip If $qu_i\in E(Q)$ and $q\in\as{v_kv_{k+2}}$, then $qu_i$
  is coloured $i$.\hfill ($\star$) \smallskip

  \Figure{LowerBoundProof}{\includegraphics{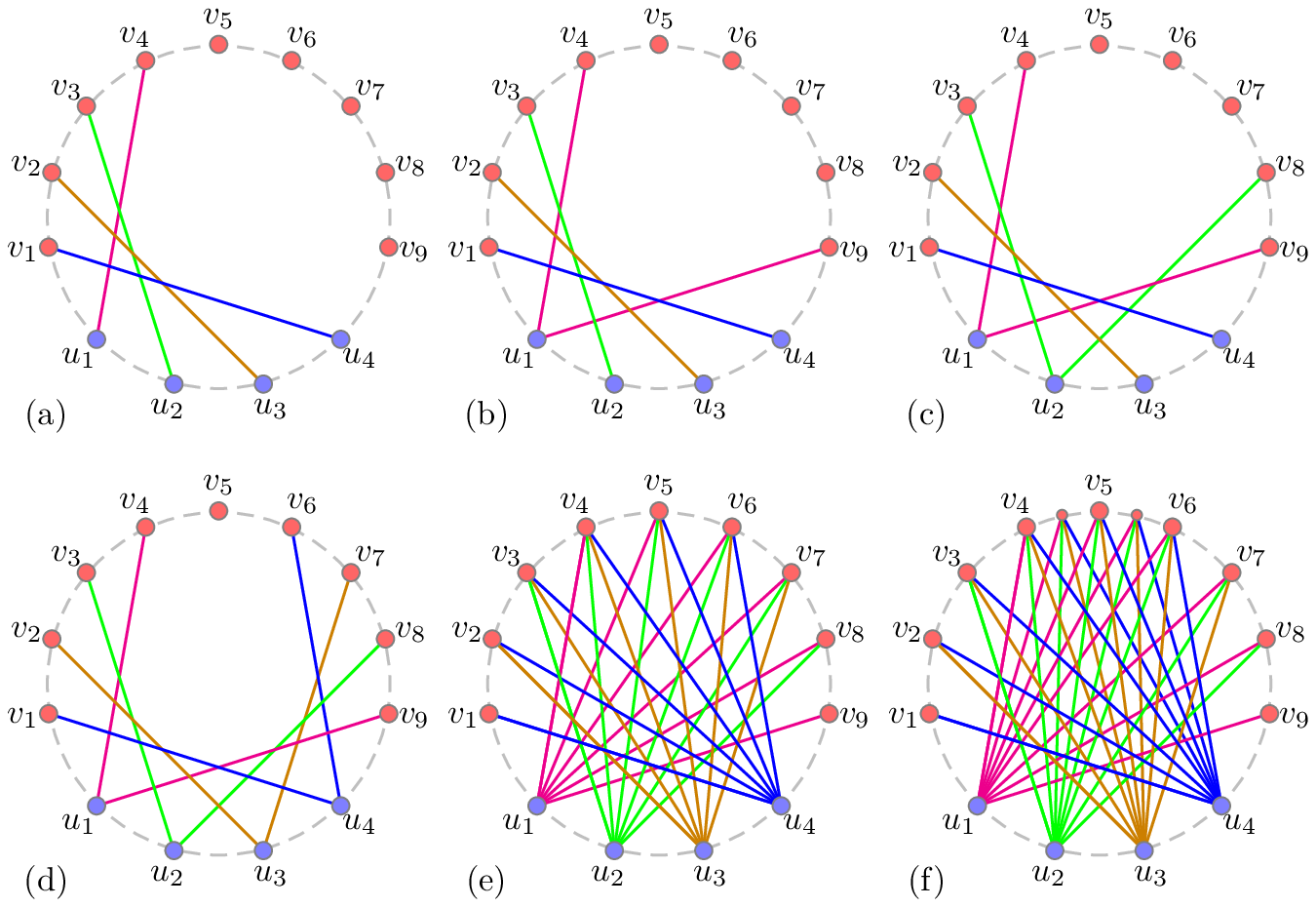}}{Illustration
    of the proof of \thmref{bt-smooth} with $k=4$.}

  Note that the argument up to now is the same as in
  \citep{DujWoo-DCG07}. Let $w$ be the vertex in $T$ adjacent to
  $v_{k+1}$. Vertex $w$ is in $\as{v_{k}v_{k+2}}$, as otherwise the
  edge $wv_{k+1}$ crosses $k$ edges of $Q[\{v_{k},v_{k+2}\};K]$ that
  are all coloured differently. Without loss of generality, $w$ is in
  $\as{v_{k}v_{k+1}}$. Each vertex $x\in T(w)$ is in
  $\as{v_{k}v_{k+1}}$, as otherwise $xw$ crosses $k$ edges in
  $Q[\{v_k,v_{k+1}\};K]$ that are all coloured differently. 
  Therefore, all nine vertices in $T(w)$ are in $\as{v_{k}v_{k+1}}$.
  By the pigeonhole principle, at least one of $\as{v_{k}w}$ or
  $\as{wv_{k+1}}$ contains two vertices from $T_i(w)$ and two vertices
  from $T_j(w)$ for some $i,j\in\{2,3,4\}$ with $i\neq j$.
  Let $x_1,x_2,x_3,x_4$ be these four vertices in clockwise order in
  $\as{v_{k}w}$ or $\as{wv_{k+1}}$.

  {\em Case 1}. $x_1$, $x_2$, $x_3$ and $x_4$ are in $\as{v_{k}w}$: By
  ($\star$), the edges in $Q[\{w\};K]$ are coloured
  $2,3,\dots,k$. Thus $x_2v_{k+1}$, which crosses all the edges in
  $Q[\{w\};K]$, is coloured $1$.  At least one of vertices in $\{x_2,
  x_3, x_4\}$ is adjacent to $\{K\setminus \{u_1,u_i\}\}$ and at least
  one to $\{K\setminus \{u_1,u_j\}\}$. Thus, by ($\star$), the edges
  in $Q[\{x_2,x_3,x_4\};K]$ are coloured $2,3,\dots,k$. Thus $x_1w$,
  which crosses all the edges of $Q[\{x_2,x_3,x_4\};K]$ is coloured
  $1$. Thus $x_2v_{k+1}$ and $ x_1w$ cross and are both coloured $1$,
  which is the desired contradiction.

  {\em Case 2}. $x_1$, $x_2$, $x_3$ and $x_4$ are in $\as{wv_{k+1}}$:
  As in Case 1, the edges in $Q[\{x_2,x_3,x_4\};K]$ are coloured
  $2,3,\dots,k$. Thus $x_1v_{k+1}$, which crosses all the edges in
  $Q[\{x_2,x_3,x_4\};K]$, is coloured $1$. Since the edges in
  $Q[\{x_1,x_2,x_3\};K]$ are coloured $2,3,\dots,k$, the edge $x_4w$,
  which crosses all the edges of $Q[\{x_1,x_2,x_3\};K]$, is coloured
  $1$. Thus $x_1v_{k+1}$ and $x_4w$ cross and are both coloured $1$,
  which is the desired contradiction.
 
  Finally, observe that $|V(Q)|=|K|+|S|+|T| +\sum_{w\in Q}|T(w)|=
  |K|+11|S|= k+11(2k^2+1)$. Adding more $k$-simplicial vertices to $Q$
  does not reduce its book thickness. Moreover, it is simple to verify
  that the graph obtained from $Q$ by adding simplicial vertices onto
  $K$ has a smooth degree-4 tree decomposition of width $k$. Thus for
  all $n\geq11(2k^2+1)+k$, there is a $k$-tree $G$ with $n$ vertices
  and $\bt{G}=k+1$ that has the desired tree decomposition.
\end{proof}

\section{Final Thoughts}

For $k\geq3$, the minimum book thickness of a $k$-tree is $\lceil\frac{k+1}{2}\rceil$ (since every $k$-tree contains $K_{k+1}$, and $\bt{K_{k+1}}=\lceil\frac{k+1}{2}\rceil$; see \citep{BK79}). However, we now show that the range of book thicknesses of sufficiently large $k$-trees is very limited. 

\begin{proposition}
\label{Range}
Every $k$-tree $G$ with at least $\frac{1}{2}k(k+1)$ vertices has book thickness
$k-1$, $k$ or $k+1$.
\end{proposition}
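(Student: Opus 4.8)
The plan is to prove the upper bound $\bt{G}\le k+1$ and the lower bound $\bt{G}\ge k-1$ separately; the upper bound is already supplied by \citet{GH-DAM01} (every $k$-tree has a $(k+1)$-page book embedding), so the real content is the lower bound. I would argue that any sufficiently large $k$-tree contains, as a subgraph, a graph whose book thickness is known to be at least $k-1$, and since book thickness is monotone under taking subgraphs this forces $\bt{G}\ge k-1$. The natural candidate is the complete split graph $K^\star_{k,t}$ for $t$ large: it is a $k$-tree obtained by adding $t$ simplicial vertices onto a common $k$-clique $K$, and it sits inside $G$ whenever $G$ has a vertex (equivalently a $k$-clique in the tree decomposition) onto which many simplicial vertices hang. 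The first step is therefore a counting/pigeonhole argument: in a smooth tree decomposition of the $k$-tree $G$ on $N\ge\frac12 k(k+1)$ vertices, since every bag has $k+1$ vertices and distinct bags contribute at least one new vertex, there are at least $N-k$ bags; I want to extract from this a single $k$-clique $K$ that is contained in many bags, hence has many simplicial-type neighbours forming (together with $K$) a copy of $K^\star_{k,t}$ with $t$ as large as I need, and in particular $t\ge k$ suffices to make the book-thickness lower bound bite.

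The second step is the book-thickness lower bound for $K^\star_{k,t}$ itself. Here I would reuse exactly the crossing argument from the proof of \thmref{bt-smooth}: order the vertices on the spine, use pigeonhole to find $k$ of the $S$-vertices in one arc $\as{u_1u_k}$, observe that the $k$ edges $\{u_iv_{k-i+1}\}$ are pairwise crossing and hence need $k$ distinct colours, so $\bt{K^\star_{k,t}}\ge k$ once $t$ is large enough (the threshold in \thmref{bt-smooth} is $t\ge 2k^2+1$, but $t\ge k$ already gives $\ge k$ pairwise crossing edges after pigeonholing into the $k$ arcs determined by $K$, provided $t\ge k^2-k+1$ or so). This yields $\bt{G}\ge k$, which is in fact stronger than the claimed $k-1$; alternatively, to hit the precise bound $\frac12 k(k+1)$ on the number of vertices I may only be able to guarantee $t\ge k-1$ vertices in some arc and hence only $k-1$ pairwise crossing edges, giving the clean $\bt{G}\ge k-1$. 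Either way the lower bound $k-1$ follows.

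The main obstacle will be the bookkeeping that connects the hypothesis ``$G$ has at least $\frac12 k(k+1)$ vertices'' to ``some $k$-clique of $G$ has at least $k-1$ (or $k$) simplicial-type neighbours in a common arc''. One has to be a little careful: it is not automatic that a large $k$-tree has a single $k$-clique with many simplicial vertices added onto it — e.g. a ``path-like'' $k$-tree spreads its bags out — so the right statement is the weaker one that some $k$-clique $K$ is a subset of many bags \emph{or} the tree decomposition is long enough that a path-of-bags argument applies. I would handle this by the standard fact that in a smooth tree decomposition the host tree $T$ has $N-k$ nodes, so either $T$ has a node of large degree (yielding a $k$-clique in many bags, hence many simplicial vertices, hence a large $K^\star_{k,t}$) or $T$ has a long path (and a $k$-tree with a smooth degree-$2$, i.e.\ path, decomposition of width $k$ has pathwidth $k$, so by \citet{TY-DM02} its book thickness is at most $k$, which is consistent, but for the lower bound one instead notes such a graph still contains $K_{k+1}$ and, more usefully, contains arbitrarily large $K_{2,t}$-type or split-graph-type subgraphs along the path). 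The cleanest route, and the one I would actually write, is: show $G$ must contain $K^\star_{k,k-1}$ as a subgraph purely from the vertex count $\frac12 k(k+1) = k + \frac12 k(k-1) \ge k+(k-1)\cdot\text{something}$ — more precisely, among the $\ge \frac12 k(k-1)$ non-$K$ vertices, enough of them are added onto $k$-cliques that, by pigeonhole over the boundedly many $k$-cliques ``active'' early in the construction, share a common $k$-clique — and then apply the pairwise-crossing-edges argument. I expect verifying that the constant $\frac12 k(k+1)$ is exactly what makes this pigeonhole go through to be the fiddly part, but it is routine once the structure is set up.
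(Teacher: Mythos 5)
There is a genuine gap, and it is exactly at the step you yourself flag as ``the fiddly part'': the claim that a $k$-tree with at least $\tfrac12 k(k+1)$ vertices must contain a large complete split graph $K^\star_{k,t}$ (or even $K^\star_{k,k-1}$) is false. The $k$-th power of a path is an arbitrarily large $k$-tree in which every $k$-clique has at most two common neighbours, so no pigeonhole over vertices, bags, or ``active cliques'' can ever produce a single $k$-clique with $k-1$ simplicial-type neighbours, let alone the roughly $k^2$ such neighbours that the pairwise-crossing argument actually needs (to force $k$, or even $k-1$, of them into one arc determined by $K$ on the spine). Indeed, if your stronger conclusion $\bt{G}\geq k$ held for all large $k$-trees it would contradict the paper's own remark that path powers are $k$-trees with book thickness $k-1$. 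Your fallback branches do not repair this: the ``long path in the host tree'' case only gives the upper bound $\bt{G}\leq k$ via \citet{TY-DM02} and then appeals vaguely to $K_{k+1}$ or ``split-graph-type subgraphs along the path'', but $\bt{K_{k+1}}=\lceil\tfrac{k+1}{2}\rceil$ is far below $k-1$, and no such subgraph with the required book thickness exists in a path power. Likewise $K^\star_{k,k-1}$, even when it is present, does not have book thickness $k-1$ in general, so the weak version of your claim would not suffice even if it were true.

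The paper's proof avoids structure entirely and is a two-line edge count: a $k$-tree satisfies $|E(G)|=k|V(G)|-\tfrac12 k(k+1)$, and every graph satisfies $|E(G)|<(\bt{G}+1)|V(G)|$ (each page, together with the spine, is outerplanar). Hence, when $|V(G)|\geq\tfrac12 k(k+1)$,
\[
(k-1)|V(G)|\;\leq\; k|V(G)|-\tfrac12 k(k+1)\;=\;|E(G)|\;<\;(\bt{G}+1)|V(G)|,
\]
so $\bt{G}>k-2$ and therefore $\bt{G}\geq k-1$, with the upper bound $\bt{G}\leq k+1$ from \citet{GH-DAM01} as you said. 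If you want to salvage your approach, this density argument is the missing ingredient: the hypothesis $|V(G)|\geq\tfrac12 k(k+1)$ enters only through the edge count, not through any guaranteed subgraph.
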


\begin{proof}
\citet{GH-DAM01} proved that $\bt{G}\leq k+1$. It remains to prove
that $\bt{G}\geq k-1$ assuming $|V(G)|\geq \frac{k(k+1)}{2}$. Numerous
authors \citep{BK79,CD84,Keys75} observed that
$|E(G)|<(\bt{G}+1)|V(G)|$ for every graph $G$. Thus
$$(k-1)|V(G)|\leq k|V(G)|-\tfrac{1}{2}k(k+1)=|E(G)|<(\bt{G}+1)|V(G)|\enspace.$$
Hence $k-1<\bt{G}+1$. Since $k$ and $\bt{G}$ are integers, $\bt{G}\geq
k-1$.
\end{proof} 

We conclude the paper by discussing some natural open problems regarding the computational complexity of calculating the book thickness for various classes of graphs.

Proposition~\ref{Range} begs the question: Is there a characterisation
of the $k$-trees with book thickness $k-1$, $k$ or $k+1$? And somewhat
more generally, is there a polynomial-time algorithm to determine the
book thickness of a given $k$-tree?  Note that the $k$-th power of
paths are an infinite class of $k$-trees with book thickness $k-1$;
see \citep{SGB95}.


$k$-trees are the edge-maximal chordal graphs with no $(k+2)$-clique,
and also are the edge-maximal graphs with treewidth $k$. Is there a polynomial-time algorithm to determine the book thickness of a given chordal graph?  Is there a polynomial-time  algorithm to determine the book thickness of a given graph with bounded treewidth?


\def\cprime{$'$} \def\soft#1{\leavevmode\setbox0=\hbox{h}\dimen7=\ht0\advance
  \dimen7 by-1ex\relax\if t#1\relax\rlap{\raise.6\dimen7
  \hbox{\kern.3ex\char'47}}#1\relax\else\if T#1\relax
  \rlap{\raise.5\dimen7\hbox{\kern1.3ex\char'47}}#1\relax \else\if
  d#1\relax\rlap{\raise.5\dimen7\hbox{\kern.9ex \char'47}}#1\relax\else\if
  D#1\relax\rlap{\raise.5\dimen7 \hbox{\kern1.4ex\char'47}}#1\relax\else\if
  l#1\relax \rlap{\raise.5\dimen7\hbox{\kern.4ex\char'47}}#1\relax \else\if
  L#1\relax\rlap{\raise.5\dimen7\hbox{\kern.7ex
  \char'47}}#1\relax\else\message{accent \string\soft \space #1 not
  defined!}#1\relax\fi\fi\fi\fi\fi\fi} \def\Dbar{\leavevmode\lower.6ex\hbox to
  0pt{\hskip-.23ex\accent"16\hss}D}

\end{document}